\newtheorem{theorem}{Theorem}[section]
\newtheorem{lemma}[theorem]{Lemma}
\newtheorem{proposition}[theorem]{Proposition}
\theoremstyle{definition}
\newtheorem{remark}[theorem]{Remark}
\newtheorem{definition}[theorem]{Definition}
\newtheorem{example}[theorem]{Example}
\theoremstyle{remark}
\numberwithin{equation}{section}
\begin{document}

\title{Boundedness of composition operators from Lorentz spaces to Orlicz spaces}

\author{Naoya Hatano, Masahiro Ikeda and Ryota Kawasumi
}

\address[Naoya Hatano]{Department of Department of Mathematics, Chuo University, 1-13-27, Kasuga, Bunkyo-ku, Tokyo 112-8551, Japan,}

\address[Masahiro Ikeda]{Center for Advanced Intelligence Project, RIKEN, Japan/Department of Mathematics, Faculty of Science and Technology, Keio University, 3-14-1 Hiyoshi, Kohoku-ku, Yokohama 223-8522, Japan,}

\address[Ryota Kawasumi]{Faculty of Business Administration, Kobe Gakuin University, 1-1-3 Minatojima, Chuo-ku, Kobe, Hyogo, 650-8586, Japan}

\email[Naoya Hatano]{n.hatano.chuo@gmail.com}

\email[Masahiro Ikeda]{masahiro.ikeda@riken.jp}

\email[Ryota Kawasumi]{kawasumi@ba.kobegakuin.ac.jp}

\begin{abstract}
The boundedness (continuity) of composition operators from some function space to another one is significant, though there are few results about this problem.
Thus, in this study, we provide necessary and sufficient conditions on the boundedness of composition operators from Lorentz spaces to Orlicz spaces. We also give a counter example of a mapping which implies unboundedness of the composition operators from a Lebesgue space $L^p$ to another Lebesgue space $L^q$ with $p>q$.


We emphasize that the measure spaces associated with the Lorentz space may be different from those associated with the Orlicz spaces.
We give more examples and counterexamples of the composed mappings in the conditions satisfying our main results.
\end{abstract}

\keywords{
Composition operator, Boundedness, Orlicz spaces,
Lorentz spaces.
}

\subjclass[2020]{Primary 42B35; Secondary 42B25}
\maketitle

\section{Introduction}

Throughout this paper, the simbol $L^0(X,\mu)$ is defined by the space of all measurable functions over the measure space $(X,\mu)$. Let $X=(X,\mu)$ and $Y=(Y,\nu)$ be $\sigma$-finite measure spaces.

\begin{definition}
Let $\tau:Y\to X$ be a measurable map, and assume that $\tau$ is nonsingular, namely, $\nu(\tau^{-1}(E))=0$ for each $\mu$-null set $E$.
The composition operator $C_\tau$ is defined by
\[
C_\tau f(y)
\equiv
f(\tau(y)),
\quad
y\in Y,
\]
for all $f\in L^0(X,\mu)$.
\end{definition}

Hereafter $\tau$ stands for a measurable and nonsingular map from $Y$ to $X$.

The boundedness of the composition operators on the Lebesgue spaces is classical and well known by Singh \cite{Singh76}.
After that many authors gave some generalizations for some function spaces \cite{ADV07, Evseev17,CHRL04,Kumar97,EvMe19-1,EvMe19-2,RaSh12}.
Especially, the boundedness on the Orlicz spaces is given by Cui, Hudzik, Kumar and Maligranda in \cite{CHRL04}.
In this paper, we aim the boundedness from one function space to another one.
When $0<p\le q<\infty$, the necessary and sufficient condition of the boundedness of the composition operator $C_\tau$ from $L^p(X,\mu)\to L^q(Y,\nu)$ is for all $\mu$-measurable sets $E\subset X$,
\[
\nu(\tau^{-1}(E))^p
\le C
\mu(E)^q.
\]
Moreover, when $0<q<p<\infty$, there is a counterexample of $\tau$ that the composition operator $C_\tau$ is bounded from $L^p(X,\mu)$ to $L^q(Y,\nu)$. As mentioned in \cite{Evseev17} in the framework of the Lorentz spaces, we can show the boundedness of $C_\tau:L^{p,r}(X,\mu)\to L^{q,r}(Y,\nu)$ without the condition $0<p\le q<\infty$,
where $L^{p,r}(X,\mu)$ and $L^{q,r}(Y,\nu)$ are Lorentz spaces with $r\in (0,\infty]$.

In this paper, we provide necessary and sufficient conditions on the boundedness of composition operators from Lorentz spaces to Orlicz spaces. We emphasize that the measure spaces associated with the Lorentz space may be different from those associated with the Orlicz spaces.
Additionally, we give some examples for the cases which $(X,\mu)$ and $(Y,\nu)$ the counting measure and Lebesgue measure spaces, respectively.
According to these examples, we conject to be able to consider the pullback from some continuous data to some discrete data in the machine learning.



Here and below, we will mention the main theorems.
The mapping $\Phi:[0,\infty)\to[0,\infty)$ is called Young function when it satisfies the following conditions:
\begin{itemize}
\item[(1)] $\Phi$ is a positive function on $(0,\infty)$.
\item[(2)] $\Phi$ is a convex function.
\item[(3)] $\lim\limits_{t\downarrow0}\Phi(t)=\Phi(0)=0$.
\end{itemize}
Hereafter we assume that $\Phi$ is a Young function.

\begin{definition}\label{def:Orlicz-space}
Let $\Phi:[0,\infty)\to[0,\infty)$ satisfy that $\Phi((\cdot)^{1/q})$ is a Young function for some $0<q<\infty$.
The Orlicz space $L^\Phi(X,\mu)$ is defined by
\[
L^\Phi(X,\mu)
\equiv
\left\{
f\in L^0(X,\mu)
\,:\,
\int_X\Phi(\epsilon |f(x)|)\,{\rm d}\mu(x)
<\infty, \quad \text{for every}\quad \epsilon >0
\right\}
\]
endowed with the {\rm (quasi-)}norm
\[
\|f\|_{L^\Phi(X)}
\equiv
\inf\left\{
\lambda>0\,:\,
\int_X
\Phi\left(\frac{|f(x)|}\lambda\right)
\,{\rm d}\mu(x)
\le1
\right\}.
\]
\end{definition}
Orlicz spaces are (quasi-)Banach spaces.
If $\Phi(t) = t^p$, $0<p<\infty$, then $L^\Phi(X, \mu)$ is $L^p(X,\mu)$.
Moreover, when $\Phi(t)=t\log(3+t)$ or $e^t-1$, the function space $L^\Phi(X,\mu)$ is an Orlicz space which is given as $L\log L(X,\mu)$ or $\exp L(X,\mu)$.

\begin{remark}
$\sigma$-finiteness is not necessary in the above definition.    
\end{remark}

\begin{definition}\label{def:Lorentz-space}
Let $0<p\le\infty$ and $0<q\le\infty$.
The Lorentz space $L^{p,q}(X,\mu)$ is defined by the space of all $f\in L^0(X,\mu)$ with the finite quasi-norm
\[
\|f\|_{L^{p,q}(X)}
\equiv
\begin{cases}
\displaystyle
\left(
\int_0^\infty
\left[t\mu(\{x\in X\,:\,|f(x)|>t\})^{\frac1p}\right]^q
\,\frac{{\rm d}t}t
\right)^{\frac1q},
& q<\infty, \\
\displaystyle
\sup_{t>0}
t\mu(\{x\in X\,:\,|f(x)|>t\})^{\frac1p},
&q=\infty.
\end{cases}
\]
\end{definition}

Since $L^{p,p}(X,\mu)=L^p(X,\mu)$, the Lorentz spaces are generalization for the Lebesgue spaces.
Additionally, the Lorentz spaces $L^{p,\infty}(X,\mu)$ stand for the weak Lebesgue spaces.
When $p=\infty$, the Lorentz space $L^{\infty,q}(X,\mu)$ is equivalent to $L^\infty(X,\mu)$, formally.
The Lorentz spaces $L^{p,q}(X,\mu)$ are quasi-Banach spaces.
But it is well known that $p,q>1$ implies the Lorentz spaces $L^{p,q}(X,\mu)$ are normable (cf. \cite{Grafakos14-1}).


The following is our main result.

\begin{theorem}\label{main:Lorentz-Orlicz}
Let $0<p<\infty$ and $\Phi:[0,\infty)\to[0,\infty)$ be a Young function.
Then, the composition operator $C_\tau$ is bounded from $L^{p,1}(X,\mu)$ to $L^\Phi(Y,\nu)$ if and only if there exists a constant $D\ge1$ such that for all $\mu$-measurable sets $E\subset X$,
\[
\nu(\tau^{-1}(E))
\le
\left\{
\Phi\left(\frac1{D\mu(E)^{\frac1p}}\right)
\right\}^{-1}.
\]
\end{theorem}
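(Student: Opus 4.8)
The plan is to reduce both implications to the action of $C_\tau$ on indicator functions, since for these the source norm and the target norm are completely explicit. I first record two computations. For $E\subset X$ with $\mu(E)<\infty$ the distribution function of $\chi_E$ equals $\mu(E)$ on $[0,1)$ and vanishes afterwards, so Definition~\ref{def:Lorentz-space} with exponent $1$ gives $\|\chi_E\|_{L^{p,1}(X)}=\mu(E)^{1/p}$. On the target side $C_\tau\chi_E=\chi_{\tau^{-1}(E)}$ and $\int_Y\Phi(|\chi_F|/\lambda)\,{\rm d}\nu=\Phi(1/\lambda)\,\nu(F)$, so the Luxemburg norm of Definition~\ref{def:Orlicz-space} of an indicator is $\|\chi_F\|_{L^\Phi(Y)}=\left\{\Phi^{-1}\!\left(1/\nu(F)\right)\right\}^{-1}$, where $\Phi^{-1}$ is the inverse of $\Phi$. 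A preliminary observation I would make is that a convex $\Phi$ with $\Phi(0)=0$ that is positive on $(0,\infty)$ is automatically strictly increasing and unbounded, so $\Phi^{-1}$ is a genuine continuous increasing bijection of $[0,\infty)$ and the identities $\Phi(\Phi^{-1}(u))=u$ and $\Phi^{-1}(\Phi(s))=s$ are available.

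For necessity, suppose $C_\tau$ is bounded with norm $M$. Testing on $\chi_E$ and inserting the two norms above gives $\left\{\Phi^{-1}(1/\nu(\tau^{-1}(E)))\right\}^{-1}\le M\mu(E)^{1/p}$ whenever $\mu(E)<\infty$. Rearranging via the monotonicity of $\Phi$ and $\Phi^{-1}$ (take reciprocals, apply $\Phi$, take reciprocals again) turns this into $\nu(\tau^{-1}(E))\le\left\{\Phi(1/(M\mu(E)^{1/p}))\right\}^{-1}$, which is the asserted inequality with $D=\max\{M,1\}$; enlarging the constant only enlarges the right-hand side, so this is legitimate. Sets of infinite $\mu$-measure are harmless, since there the right-hand side equals $\{\Phi(0)\}^{-1}=+\infty$.

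For sufficiency I would pass from indicators to arbitrary $f$ by the layer-cake representation. Replacing $f$ by $|f|$ (the norms depend only on $|f|$, and $|C_\tau f|=C_\tau|f|$), write $f=\int_0^\infty\chi_{\{f>t\}}\,{\rm d}t$, whence $C_\tau f=\int_0^\infty\chi_{\tau^{-1}(\{f>t\})}\,{\rm d}t$ pointwise $\nu$-almost everywhere. Because $\Phi$ is a Young function, $L^\Phi(Y,\nu)$ is a genuine Banach space and Minkowski's integral inequality applies, yielding $\|C_\tau f\|_{L^\Phi(Y)}\le\int_0^\infty\|\chi_{\tau^{-1}(\{f>t\})}\|_{L^\Phi(Y)}\,{\rm d}t$. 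For each $t>0$ I combine the indicator-norm formula with the hypothesis: from $1/\nu(\tau^{-1}(\{f>t\}))\ge\Phi(1/(D\mu(\{f>t\})^{1/p}))$ and the monotonicity of $\Phi^{-1}$ I obtain $\|\chi_{\tau^{-1}(\{f>t\})}\|_{L^\Phi(Y)}\le D\mu(\{f>t\})^{1/p}$. Integrating in $t$ and recognising $\int_0^\infty\mu(\{f>t\})^{1/p}\,{\rm d}t=\|f\|_{L^{p,1}(X)}$ gives $\|C_\tau f\|_{L^\Phi(Y)}\le D\|f\|_{L^{p,1}(X)}$, so the operator norm is at most $D$.

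The main obstacle, and the only step that is not purely formal, is precisely this passage from indicators to general $f$, namely the validity of Minkowski's integral inequality together with a careful treatment of the generalised inverse $\Phi^{-1}$. The inequality rests on $L^\Phi(Y,\nu)$ being an honest normed space, which is exactly where the convexity of $\Phi$ is used (as opposed to the weaker hypothesis that $\Phi((\cdot)^{1/q})$ be a Young function allowed in Definition~\ref{def:Orlicz-space}); the strict monotonicity and unboundedness of $\Phi$ noted above then make the manipulations with $\Phi^{-1}$ routine. A secondary point to confirm is that $C_\tau f$ genuinely belongs to $L^\Phi(Y,\nu)$ in the sense of Definition~\ref{def:Orlicz-space}, i.e. finiteness of $\int_Y\Phi(\epsilon|C_\tau f|)\,{\rm d}\nu$ for every $\epsilon>0$; this is verified directly from the hypothesis by rewriting the modular through the distribution function of $|f|$ and applying the same indicator estimate.
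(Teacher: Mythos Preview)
Your necessity argument coincides with the paper's: both test $C_\tau$ on indicators and read off the conclusion from the explicit formulas $\|\chi_E\|_{L^{p,1}}=\mu(E)^{1/p}$ and $\|\chi_F\|_{L^\Phi}=\{\Phi^{-1}(1/\nu(F))\}^{-1}$ (the paper's Lemmas~\ref{lem:Orlicz-indicator} and~\ref{lem:Lorentz-indicator}).

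For sufficiency, however, your route is genuinely different. The paper works at the level of the \emph{modular}: after normalising so that $\|f\|_{L^{p,\infty}}\le(2D)^{-1}$, it writes $\int_Y\Phi(|C_\tau f|)\,{\rm d}\nu=\int_0^\infty\varphi(t)\,\nu_{C_\tau f}(t)\,{\rm d}t$ via the left derivative $\varphi$ of $\Phi$ (Lemma~\ref{lem:Young}(1)), feeds the volume hypothesis into the distribution function, and then uses the monotonicity of $t\mapsto\Phi(t)/t$ together with the pointwise bound $\varphi(t)\le\Phi(2t)/t$ (Lemma~\ref{lem:Young}(2)) to collapse everything to $\int_0^\infty\mu_f(t)^{1/p}\,{\rm d}t=\|f\|_{L^{p,1}}$. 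You instead stay at the level of the \emph{norm} throughout: the pointwise layer cake $C_\tau f=\int_0^\infty\chi_{\tau^{-1}(\{f>t\})}\,{\rm d}t$ combined with Minkowski's integral inequality in the Banach function space $L^\Phi(Y,\nu)$ reduces the problem in one stroke to the indicator bound $\|\chi_{\tau^{-1}(\{f>t\})}\|_{L^\Phi}\le D\,\mu_f(t)^{1/p}$, which is immediate from the hypothesis and the indicator-norm formula. Your argument is shorter, avoids the auxiliary estimates on $\varphi$, dispenses with the normalisation trick, and delivers the sharp operator bound $D$ rather than a multiple of it; the paper's modular computation, by contrast, is more self-contained in that it never invokes the integral Minkowski inequality (which, though standard for Orlicz spaces via Jensen's inequality applied to the convex $\Phi$, lies outside the lemmas the paper assembles in Section~\ref{s:Pre}).
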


By this theorem and the properties
\[
\|f\|_{L^{p,q}(X)}
=
\||f|^q\|_{L^{p/q,1}(X)}^{\frac1q},
\quad
\|f\|_{L^\Phi(X)}
=
\||f|^q\|_{L^{\Phi((\cdot)^{1/q})}(X)}^{\frac1q}
\]
for each $f\in L^0(X,\mu)$, the following theorem can be obtained immediately.
Then, we can see the following generalization for the previous main result.

\begin{theorem}\label{main:Lorentz-Orlicz-2}
Let $0<p,q<\infty$ and $\Phi:[0,\infty)\to[0,\infty)$.
If $\Phi((\cdot)^{1/q})$ is a Young function, then, the composition operator $C_\tau$ is bounded from $L^{p,q}(X,\mu)$ to $L^\Phi(Y,\nu)$ if and only if there exists a constant $D\ge1$ such that for all $\mu$-measurable sets $E\subset X$,
\begin{equation}\label{eq:vol}
\nu(\tau^{-1}(E))
\le
\left\{
\Phi\left(\frac1{D\mu(E)^{\frac1p}}\right)
\right\}^{-1}.
\end{equation}
\end{theorem}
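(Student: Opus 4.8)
The plan is to reduce Theorem~\ref{main:Lorentz-Orlicz-2} to the already-proved Theorem~\ref{main:Lorentz-Orlicz} by means of the ``$q$-th power trick''. Write $\Psi := \Phi((\cdot)^{1/q})$, which is a Young function precisely by the hypothesis imposed on $\Phi$. The only structural fact about $C_\tau$ that is needed is that it commutes with taking powers: directly from the definition $C_\tau f(y)=f(\tau(y))$ one has the pointwise identity
\[
C_\tau(|f|^q)(y) = |f(\tau(y))|^q = |C_\tau f(y)|^q, \qquad y \in Y,
\]
so that $C_\tau(|f|^q) = |C_\tau f|^q$ for every $f \in L^0(X,\mu)$.

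Next I would feed this into the two norm identities recorded before the statement. For the target space, $\|C_\tau f\|_{L^\Phi(Y)} = \||C_\tau f|^q\|_{L^\Psi(Y)}^{1/q} = \|C_\tau(|f|^q)\|_{L^\Psi(Y)}^{1/q}$, while for the domain $\|f\|_{L^{p,q}(X)} = \||f|^q\|_{L^{p/q,1}(X)}^{1/q}$. Since both the Lorentz and Orlicz quasi-norms depend only on $|f|$, and since $g := |f|^q$ runs over all nonnegative members of $L^{p/q,1}(X,\mu)$ as $f$ runs over $L^{p,q}(X,\mu)$, these identities show that the inequality $\|C_\tau f\|_{L^\Phi(Y)} \le C\|f\|_{L^{p,q}(X)}$ holds for all $f$ if and only if $\|C_\tau g\|_{L^\Psi(Y)} \le C^q \|g\|_{L^{p/q,1}(X)}$ holds for all $g$. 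In other words, $C_\tau \colon L^{p,q}(X,\mu)\to L^\Phi(Y,\nu)$ is bounded if and only if $C_\tau\colon L^{p/q,1}(X,\mu)\to L^\Psi(Y,\nu)$ is bounded.

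Now I would apply Theorem~\ref{main:Lorentz-Orlicz} with the first Lorentz exponent $p/q \in (0,\infty)$ and the Young function $\Psi$ in place of $\Phi$. It yields that the latter boundedness is equivalent to the existence of $D \ge 1$ with
\[
\nu(\tau^{-1}(E)) \le \left\{ \Psi\!\left( \frac{1}{D\,\mu(E)^{q/p}} \right)\right\}^{-1}
\]
for all $\mu$-measurable $E\subset X$. It remains to rewrite this in terms of $\Phi$: since $\Psi(s)=\Phi(s^{1/q})$ and $\mu(E)^{q/p}=(\mu(E)^{1/p})^{q}$, the argument simplifies to $\Phi\big(1/(D^{1/q}\mu(E)^{1/p})\big)$, and setting $D':=D^{1/q}\ge 1$ turns the condition into exactly \eqref{eq:vol}. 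Conversely \eqref{eq:vol} with constant $D'$ gives the Theorem~\ref{main:Lorentz-Orlicz} condition with $D=(D')^{q}$.

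The argument is essentially bookkeeping, so I do not expect a genuine obstacle; the two points demanding care are (i) checking that it suffices to test the operator inequality on nonnegative functions, which is what legitimizes replacing $f$ by $g=|f|^q$, and (ii) tracking the constants through the substitution, in particular the passage $D \leftrightarrow D^{1/q}$ and any $q$-dependent factor hidden in the norm identities, none of which affects the qualitative boundedness statement.
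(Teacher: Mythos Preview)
Your proposal is correct and is essentially the same argument as the paper's: the paper also derives Theorem~\ref{main:Lorentz-Orlicz-2} from Theorem~\ref{main:Lorentz-Orlicz} via the $q$-th power identities $\|f\|_{L^{p,q}(X)}=\||f|^q\|_{L^{p/q,1}(X)}^{1/q}$ and $\|f\|_{L^\Phi(X)}=\||f|^q\|_{L^{\Phi((\cdot)^{1/q})}(X)}^{1/q}$, stating only that the result ``can be obtained immediately''. Your write-up is in fact more detailed than the paper's, in particular in tracking the constant through the substitution $D\leftrightarrow D^{1/q}$.
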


We remark that the boundedness of the composition operators from some Orlicz spaces to the different Orlicz spaces is interesting. Chawziuk et al.\ \cite{CCEHK16} (Musielak-Orlicz space) and Labuschagne and Majewski \cite{LaMa11} gave the necessary and sufficient condition on $\tau$ of the boundedness of the composition operator from an Orlicz space to another Orlicz one under the assumption that $\tau$ is surjective.

We will use standard notation for inequalities.
We use $C$ to denote a positive constant that may vary from one occurrence to another.
If $f\le Cg$, then we write $f\lesssim g$ or $g\gtrsim f$, and if $f\lesssim g\lesssim f$, then we write $f\sim g$.

We organize the remaining part of the paper as follows:
We write the preliminaries for the proof of theorems in Section \ref{s:Pre}.
We prove the main theorem in Section \ref{s:proof-main}.
In Section \ref{exmple}, we give examples of $\tau$ in the condition of the main theorems.

\section{Preliminaries}\label{s:Pre}

In this section, we introduce some Lemmas for Orlicz and Lorentz spaces to prove the main theorem.
Especially, as a key lemma, we use the following layer cake formula with respect to Orlicz space, and then we can use the volume estimate in the proof of Theorem \ref{main:Lorentz-Orlicz}, directly.

\begin{lemma}\label{lem:Young}
For a Young function $\Phi$, the function $\varphi$ satisfying
\[
\Phi(t)
=
\int_0^t\varphi(s)\,{\rm d}s
\]
is said left derivative of $\Phi$.
Then, the following assertions hold{\rm :}
\begin{itemize}
\item[{\rm (1)}] $\displaystyle
\int_X\Phi(|f(x)|)\,{\rm d}\mu(x)
=
\int_0^\infty\varphi(t)\mu(\{x\in X\,:\,|f(x)|>t\})\,{\rm d}t
$. 

\item[{\rm (2)}] $\displaystyle
\frac{\Phi(t)}t\le\varphi(t)\le\frac{\Phi(2t)}t$.
\end{itemize}
\end{lemma}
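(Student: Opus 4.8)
The plan is to treat the two assertions separately, deriving (1) from Tonelli's theorem and (2) from the monotonicity of the left derivative $\varphi$. The structural fact underlying both is that, since $\Phi$ is convex with $\Phi(0)=0$ and $\Phi>0$ on $(0,\infty)$, its left derivative $\varphi$ is nonnegative and nondecreasing (the latter being the standard fact that the left derivative of a convex function is nondecreasing); I would record this at the outset. Nonnegativity follows because a value $\varphi(s_0)<0$ would force $\varphi\le\varphi(s_0)<0$ on $(0,s_0]$ by monotonicity, hence $\Phi(s_0)=\int_0^{s_0}\varphi(s)\,{\rm d}s<0$, contradicting positivity on $(0,\infty)$.

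For assertion (1), I would first rewrite, for each fixed $x$, $\Phi(|f(x)|)=\int_0^{|f(x)|}\varphi(s)\,{\rm d}s=\int_0^\infty\varphi(s)\mathbf{1}_{\{s<|f(x)|\}}\,{\rm d}s$, using the defining relation for $\varphi$. Substituting this into $\int_X\Phi(|f(x)|)\,{\rm d}\mu(x)$ and applying Tonelli's theorem---legitimate since the integrand $\varphi(s)\mathbf{1}_{\{s<|f(x)|\}}$ is nonnegative and jointly measurable---lets me exchange the order of integration to obtain $\int_0^\infty\varphi(s)\left(\int_X\mathbf{1}_{\{|f(x)|>s\}}\,{\rm d}\mu(x)\right){\rm d}s=\int_0^\infty\varphi(s)\mu(\{x\in X\,:\,|f(x)|>s\})\,{\rm d}s$. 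The only subtle point is that the inner indicator carries the strict inequality $|f(x)|>s$; replacing $\{s<|f(x)|\}$ by $\{s\le|f(x)|\}$ changes the integrand only on the set $\{s=|f(x)|\}$, which meets each vertical slice in a single value of $s$ and is therefore $(\mu\times{\rm d}s)$-null, so the integral is unaffected.

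For assertion (2), I would exploit the monotonicity of $\varphi$ on the two halves of the relevant interval. For the lower bound, $\Phi(t)=\int_0^t\varphi(s)\,{\rm d}s\le\int_0^t\varphi(t)\,{\rm d}s=t\varphi(t)$ since $\varphi(s)\le\varphi(t)$ for $s\le t$; dividing by $t>0$ yields $\Phi(t)/t\le\varphi(t)$. For the upper bound, I would discard the contribution over $(0,t)$ and bound $\varphi$ from below over $(t,2t)$: $\Phi(2t)=\int_0^{2t}\varphi(s)\,{\rm d}s\ge\int_t^{2t}\varphi(s)\,{\rm d}s\ge t\varphi(t)$, using $\varphi(s)\ge\varphi(t)$ for $s\ge t$; dividing by $t$ gives $\varphi(t)\le\Phi(2t)/t$.

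Neither part presents a serious obstacle; the only genuinely technical point is the Tonelli/measurability justification in (1) together with the strict-versus-nonstrict inequality in the superlevel set, which is precisely what makes the formula produce $\mu(\{x\in X\,:\,|f(x)|>t\})$ rather than the closed superlevel set.
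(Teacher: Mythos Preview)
The paper does not supply its own proof of this lemma: it simply cites Grafakos, \emph{Classical Fourier Analysis}, Proposition~1.1.4, for assertion~(1) and Rao--Ren, \emph{Theory of Orlicz Spaces}, p.~20, for assertion~(2). Your argument is correct and is precisely the standard one found in those references---the Tonelli/layer-cake computation for~(1) and the monotonicity-of-$\varphi$ estimates for~(2)---so there is nothing further to compare.
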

For (1), see ~\cite[Proposition~1.1.4]{Grafakos14-1} and for (2), see~\cite[P.20]{Rao-Ren1991}.

Additionally, to prove the only if part of Theorem \ref{main:Lorentz-Orlicz}, we use the Orlicz norm and Lorentz quasi-norm for the indicator functions, as follows.

\begin{lemma}
{\cite[Corllary~7 in P.78]{Rao-Ren1991}}\label{lem:Orlicz-indicator}
Let $\Phi$ be a Young function.
Then, for all measurable sets $E\subset X$, the following holds:
\[
\|\chi_E\|_{L^\Phi(X)}
=
\left\{
\Phi^{-1}\left(\frac1{\mu(E)}\right)
\right\}^{-1}.
\]
\end{lemma}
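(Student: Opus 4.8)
The plan is to compute the Luxemburg modular of $\chi_E$ explicitly and then read off the infimum defining the quasi-norm. First I would observe that, since $\chi_E$ takes only the values $1$ (on $E$) and $0$ (off $E$), and since $\Phi(0)=0$ by condition (3), for every $\lambda>0$ one has
\[
\int_X \Phi\!\left(\frac{|\chi_E(x)|}{\lambda}\right)\,{\rm d}\mu(x)
=
\Phi\!\left(\frac1\lambda\right)\mu(E).
\]
Substituting this into Definition \ref{def:Orlicz-space} reduces the whole problem to the scalar identity
\[
\|\chi_E\|_{L^\Phi(X)}
=
\inf\left\{\lambda>0 : \Phi\!\left(\tfrac1\lambda\right)\le \frac1{\mu(E)}\right\}.
\]

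Next I would record the regularity of $\Phi$ that makes the inversion legitimate. A finite convex function on $[0,\infty)$ is continuous on $(0,\infty)$, and condition (3) gives continuity at $0$; moreover, since $\Phi\ge0$ and $\Phi(0)=0$, the value $0$ is a global minimum attained at the left endpoint, so $\Phi$ is nondecreasing. Positivity on $(0,\infty)$ then forces $\Phi$ to be strictly increasing: if $\Phi$ were constant on some subinterval of $(0,\infty)$, its nondecreasing (and nonnegative) right derivative would vanish on an interval reaching down to $0$, forcing $\Phi\equiv0$ near $0$ and contradicting (1). Hence $\Phi$ is a continuous, strictly increasing bijection of $[0,\infty)$ onto $[0,M)$ with $M=\lim_{t\to\infty}\Phi(t)\in(0,\infty]$, and $\Phi^{-1}$ is a genuine continuous increasing inverse on $[0,M)$.

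With this in hand, for $E$ with $0<\mu(E)<\infty$ (so that $1/\mu(E)\in[0,M)$), the constraint $\Phi(1/\lambda)\le 1/\mu(E)$ is equivalent, by monotonicity of $\Phi$, to $1/\lambda\le \Phi^{-1}(1/\mu(E))$, that is, to $\lambda\ge\{\Phi^{-1}(1/\mu(E))\}^{-1}$. Thus the admissible set of $\lambda$ is exactly the half-line $[\{\Phi^{-1}(1/\mu(E))\}^{-1},\infty)$; continuity of $\Phi$ guarantees that the endpoint itself is admissible, so the infimum is attained and equals $\{\Phi^{-1}(1/\mu(E))\}^{-1}$, as claimed.

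I expect the only genuine obstacle to be the treatment of $\Phi^{-1}$: one must confirm that the paper's hypotheses exclude flat pieces and jumps, so that the ordinary inverse exists and the inequality $\Phi(1/\lambda)\le 1/\mu(E)$ inverts cleanly to a closed half-line in $\lambda$. For a general Young function one would instead use the generalized (right-continuous) inverse and verify the relations $\Phi(\Phi^{-1}(s))\le s$ and $\Phi^{-1}(\Phi(t))\ge t$. The degenerate cases $\mu(E)\in\{0,\infty\}$, together with the pathological situation where $\Phi$ is bounded (so that $1/\mu(E)$ may fall outside the range of $\Phi$), are handled separately by direct inspection of the modular.
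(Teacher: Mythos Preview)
Your proposal is correct and is precisely the standard direct verification from the Luxemburg norm definition. The paper itself does not supply a proof of this lemma; it is simply quoted from \cite[Corollary~7, p.~78]{Rao-Ren1991}, so there is nothing to compare against. One minor simplification you can make: under the paper's hypotheses (convex, $\Phi(0)=0$, positive on $(0,\infty)$), convexity gives $\Phi(2t)\ge 2\Phi(t)$, hence $\Phi(2^nt)\ge 2^n\Phi(t)\to\infty$, so $\Phi$ is automatically unbounded and the ``pathological'' bounded case you flag cannot occur; $\Phi^{-1}$ is then a genuine continuous bijection of $[0,\infty)$ onto itself and the inversion step goes through without the generalized-inverse machinery.
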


\begin{lemma}[{\cite[Example 1.4.8]{Grafakos14-1}}]\label{lem:Lorentz-indicator}
Let $0<p<\infty$ and $0<q\le\infty$.
Then, for all $\mu$-measurable sets $E\subset X$,
\[
\|\chi_E\|_{L^{p,q}(X)}
=
q^{-\frac1q}\mu(E)^{\frac1p},
\]
where if $q=\infty$, one assumes that $q^{-1/q}=1$.
\end{lemma}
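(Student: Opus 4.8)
The plan is to compute the quasi-norm directly from Definition \ref{def:Lorentz-space} by first determining the distribution function of $\chi_E$. Since $\chi_E$ takes only the values $0$ and $1$, for any $t\ge0$ the superlevel set $\{x\in X:\chi_E(x)>t\}$ equals $E$ when $0\le t<1$ and is empty when $t\ge1$. Hence the distribution function is
\[
\mu(\{x\in X:|\chi_E(x)|>t\})
=
\mu(E)\,\chi_{[0,1)}(t).
\]
This reduces every integral or supremum appearing in the definition of the Lorentz quasi-norm to an expression supported on the single interval $(0,1)$.

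For the case $0<q<\infty$, I would substitute this distribution function into the defining integral, obtaining
\[
\|\chi_E\|_{L^{p,q}(X)}
=
\left(
\int_0^1
\left[t\,\mu(E)^{\frac1p}\right]^q
\,\frac{{\rm d}t}t
\right)^{\frac1q}
=
\mu(E)^{\frac1p}
\left(
\int_0^1 t^{q-1}\,{\rm d}t
\right)^{\frac1q}.
\]
The elementary evaluation $\int_0^1 t^{q-1}\,{\rm d}t=1/q$ then yields $\|\chi_E\|_{L^{p,q}(X)}=q^{-1/q}\mu(E)^{1/p}$, as claimed. For the remaining case $q=\infty$ I would instead evaluate the supremum: since $t\,\mu(E)^{1/p}$ is increasing in $t$ on $(0,1)$ and the factor vanishes for $t\ge1$, we get
\[
\|\chi_E\|_{L^{p,\infty}(X)}
=
\sup_{0<t<1}t\,\mu(E)^{\frac1p}
=
\mu(E)^{\frac1p},
\]
which agrees with the stated formula under the convention $q^{-1/q}=1$.

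The argument is entirely elementary and carries no genuine obstacle; the only points demanding care are the correct identification of the superlevel set at the threshold value $t=1$ (so that the support of the distribution function is exactly $[0,1)$), the separate treatment of the $q=\infty$ branch via the supremum rather than the integral, and the consistent handling of the degenerate cases $\mu(E)=0$ and $\mu(E)=\infty$, in which both sides of the asserted identity reduce to $0$ and $\infty$ respectively.
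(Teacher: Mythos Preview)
Your argument is correct and is exactly the standard computation: identify the distribution function of $\chi_E$ as $\mu(E)\chi_{[0,1)}(t)$, substitute into Definition~\ref{def:Lorentz-space}, and evaluate the resulting integral (or supremum when $q=\infty$). There is nothing to compare against in the paper itself, since Lemma~\ref{lem:Lorentz-indicator} is merely quoted from \cite[Example~1.4.8]{Grafakos14-1} without proof; the computation you give is precisely the one carried out in that reference.
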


To normalize, we use the following embeddings for the Lorentz spaces (see \eqref{eq:240902}).

\begin{lemma}[{\cite[Proposition 1.4.10]{Grafakos14-1}}]\label{lem:Lorentz-embedding}
Let $0<p<\infty$ and $0<q_1\le q_2\le\infty$.
Then,
\[
L^{p,q_1}(X,\mu)
\hookrightarrow
L^{p,q_2}(X,\mu).
\]
\end{lemma}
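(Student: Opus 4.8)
The plan is to produce an explicit constant $C = C(q_1,q_2)$ for which $\|f\|_{L^{p,q_2}(X)} \le C\,\|f\|_{L^{p,q_1}(X)}$ holds for every $f \in L^0(X,\mu)$, which is exactly the asserted continuous inclusion. If $q_1 = q_2$ the two spaces coincide, so I may assume $q_1 < q_2$. Throughout write $d_f(t) \equiv \mu(\{x\in X : |f(x)| > t\})$ for the distribution function and set $g(t) \equiv t\,d_f(t)^{1/p}$, so that $\|f\|_{L^{p,q}(X)}^{q} = \int_0^\infty g(t)^q\,\frac{{\rm d}t}{t}$ when $q<\infty$ and $\|f\|_{L^{p,\infty}(X)} = \sup_{t>0} g(t)$. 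The single structural fact I would exploit is that $d_f$ is nonincreasing, which forces a comparison between $\sup_{t} g(t)$ and the $q_1$-integral of $g$.

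First I would treat the endpoint $q_2 = \infty$. Fix $t>0$. Since $d_f$ is nonincreasing, $d_f(s) \ge d_f(t)$ for every $s \in (0,t]$, hence
\[
\|f\|_{L^{p,q_1}(X)}^{q_1}
\ge
\int_0^t [s\,d_f(s)^{1/p}]^{q_1}\,\frac{{\rm d}s}{s}
\ge
d_f(t)^{q_1/p}\int_0^t s^{q_1-1}\,{\rm d}s
=
\frac{t^{q_1}\,d_f(t)^{q_1/p}}{q_1}.
\]
Raising to the power $1/q_1$ and taking the supremum over $t>0$ yields the weak-type bound $\|f\|_{L^{p,\infty}(X)} \le q_1^{1/q_1}\,\|f\|_{L^{p,q_1}(X)}$, which is precisely the claim when $q_2=\infty$.

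For the generic case $q_1 < q_2 < \infty$ I would split the exponent as $q_2 = (q_2-q_1) + q_1$ and pull out the excess power using the bound just obtained. Since $g(t) \le \|f\|_{L^{p,\infty}(X)}$ for every $t$ and $q_2-q_1 > 0$,
\[
\|f\|_{L^{p,q_2}(X)}^{q_2}
=
\int_0^\infty g(t)^{q_2-q_1}\,g(t)^{q_1}\,\frac{{\rm d}t}{t}
\le
\|f\|_{L^{p,\infty}(X)}^{q_2-q_1}\,\|f\|_{L^{p,q_1}(X)}^{q_1}.
\]
Substituting the weak-type estimate and taking the $q_2$-th root gives $\|f\|_{L^{p,q_2}(X)} \le q_1^{(q_2-q_1)/(q_1 q_2)}\,\|f\|_{L^{p,q_1}(X)}$, completing the embedding. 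I do not anticipate a serious obstacle: the only conceptual point is recognizing that the monotonicity of $d_f$ is what converts the $q_1$-quasi-norm into a pointwise ceiling for $g$, after which the finite-$q_2$ case is an elementary splitting of the integrand. Care is needed only with the degenerate situation $\|f\|_{L^{p,q_1}(X)} = \infty$, where both sides are trivially comparable, and with the convention $q^{-1/q} = 1$ when an index equals $\infty$.
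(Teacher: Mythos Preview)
Your argument is correct and is essentially the standard proof of this embedding (indeed it is precisely the argument in the cited reference, \cite[Proposition~1.4.10]{Grafakos14-1}); the paper itself does not supply an independent proof but simply invokes that reference, so there is nothing further to compare.
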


\section{Proof of the main theorem}\label{s:proof-main}

In this section, we will prove Theorem \ref{main:Lorentz-Orlicz}.
In this proof, for a measurable function $f$, we write the distribution functions as
\[
\mu_f(t)
\equiv
\mu(\{x\in X\,:\,|f(x)|>t\}),
\quad
\nu_f(t)
\equiv
\nu(\{x\in Y\,:\,|f(x)|>t\}),\ \ \ t\ge 0.
\]

At first, we prove the \lq\lq if" part.
Since the embedding $L^{p,1}(X,\mu)\hookrightarrow L^{p,\infty}(X,\mu)$ holds (see Lemma \ref{lem:Lorentz-embedding}), we only consider the case $\|f\|_{L^{p,\infty}(X)}\le(2D)^{-1}$ by normalization. Using Lemma \ref{lem:Young} (1) and the assumption, we have
\begin{align*}
\int_Y\Phi(|C_\tau f(y)|)\,{\rm d}\nu(y)
=
\int_0^\infty\varphi(t)\nu_{C_\tau f}(t)\,{\rm d}t
\le
\int_0^\infty
\varphi(t)\left\{\Phi\left(\frac1{D\mu_f(t)^{\frac1p}}\right)\right\}^{-1}
\,{\rm d}t.
\end{align*}
Since the mapping $t\mapsto\Phi(t)/t$ is increasing, we deduced that
\begin{align}\label{eq:240902}
\Phi\left(\frac1{D\mu_f(t)^{\frac1p}}\right)
=
\left\{
2D\cdot t\mu_f(t)^{\frac1p}
\Phi\left(\frac{2t}{2D\cdot t\mu_f(t)^{\frac1p}}\right)
\right\}
\cdot
\frac1{2D\cdot t\mu_f(t)^{\frac1p}}
\ge
\frac{\Phi(2t)}{2D\cdot t\mu_f(t)^{\frac1p}}.
\end{align}
Thus, by Lemma \ref{lem:Young} (2),
\begin{align*}
\int_Y\Phi(|C_\tau f(y)|)\,{\rm d}\nu(y)
&\le
\int_0^\infty
\varphi(t)\left\{\frac{\Phi(2t)}{2D\cdot t\mu_f(t)^{\frac1p}}\right\}^{-1}
\,{\rm d}t
\le
2D
\int_0^\infty\mu_f(t)^{\frac1p}\,{\rm d}t\\
&=
2D
\|f\|_{L^{p,1}(X)}.
\end{align*}

Next, we prove the \lq\lq only if" part.
Fix a $\mu$-measurable set $E$.
Then, assuming the boundedness of the composition operator $C_\tau$ from $L^{p,1}(X,\mu)$ to $L^\Phi(Y,\nu)$ with the operator norm $D$, we have
\[
\|C_\tau\chi_E\|_{L^\Phi(Y)}
\le D
\|\chi_E\|_{L^{p,1}(X)}.
\]
By $C_\tau\chi_E=\chi_{\tau^{-1}(E)}$, using Lemmsa \ref{lem:Orlicz-indicator} and \ref{lem:Lorentz-indicator}, we have
\[
\Phi^{-1}\left(\frac1{\nu(\tau^{-1}(E))}\right)
\ge
\frac1{D\mu(E)^{\frac1p}},
\]
or equivalently,
\[
\nu(\tau^{-1}(E))
\le
\left\{\Phi\left(\frac1{D\mu(E)^{\frac1p}}\right)\right\}^{-1},
\]
as desired.


\section{Examples}
\label{exmple}

In this section, we give some examples and counterexamples for a map $\tau$ included in our main results.

\begin{example}
If we set $\tau(y)=y$ for $y\in{\mathbb R}$, the volume estimate
\[
|\tau^{-1}(E)|
=
|E|
\]
holds for any Lebesgue measurable set $E\subset{\mathbb R}$.
If $p\ge1$, then the composition operator $C_\tau$ is bounded from $L^{p,1}({\mathbb R})$ to $L^p({\mathbb R})$.
But when $p<1$, the boundedness $C_\tau:L^{p,1}({\mathbb R})\to L^p({\mathbb R})$ fails.
In fact, choosing
\[
f(x)
=
(1+|x|)^{-\frac1p}(\log(3+|x|))^{-\frac1p},
\quad
x\in{\mathbb R},
\]
we have $f=C_\tau f\in L^{p,1}({\mathbb R})\setminus L^p({\mathbb R})$.
Here, see \cite[Excercise 1.4.8]{Grafakos14-1} for details of the example $f(x)$.
\end{example}

\begin{example}
Let $0<p,q<\infty$, $0<r\le\infty$, $(X,\mu)=({\mathbb Z},\sharp)$ and $(Y,\nu)=({\mathbb R},|\cdot|)$, and set $\tau(y)=[|y|^{p/q}]$ for $y\in{\mathbb R}$, where $[\cdot]$ is the Gauss symbol.
Note that for each $k\in\{0\}\cup{\mathbb N}$,
\[
\tau^{-1}(\{k\})
=
\left(-(k+1)^{\frac qp},-k^{\frac qp}\right]
\cup
\left[k^{\frac qp},(k+1)^{\frac qp}\right).
\]
\begin{itemize}
\item[{\rm (1)}] Let
$
E=\{n,n+1,\ldots,m-1\}\subset\{0\}\cup{\mathbb N}
$ ($n<m$),
\[
\sharp E=m-n,
\quad
|\tau^{-1}(E)|^{\frac pq}
\le
\left\{
2\sum_{j=n}^{m-1}
\left[(j+1)^{\frac qp}-j^{\frac qp}\right]
\right\}^{\frac pq}
=
\left\{2
\left(m^{\frac qp}-n^{\frac qp}\right)
\right\}^{\frac pq}
\]
Then, $p\ge q$ implies
\[
\left(m^{\frac qp}-n^{\frac qp}\right)^{\frac pq}
\le
m-n,
\]
and then
\[
|\tau^{-1}(E)|^{\frac pq}\lesssim\sharp E.
\]
But, when $p<q$, this estimate fails.
In fact, when $m=n+1$, there exists $\theta\in(0,1)$ such that
\begin{align*}
\frac12
\left(\frac{|\tau^{-1}(E)|^{\frac pq}}{\sharp E}\right)^{\frac qp}
=
(n+1)^{\frac qp}-n^{\frac qp}
=
\frac qp(n+\theta)^{\frac qp-1},
\end{align*}
and then,
\[
\lim_{n\to\infty}
\frac{|\tau^{-1}(E)|^{\frac pq}}{\sharp E}
=
\infty.
\]

\item[{\rm (2)}] Assume that $p\ge q$, and let $E$ be a general finite subset of ${\mathbb Z}$.
Then, by the convexity of $t\mapsto t^{p/q}$,
\[
|\tau^{-1}(E)|
\le
2|\tau^{-1}(\{0,1,2,\ldots,k-1\})|
\]
holds, where $k=\sharp E$.
It follows from (1) as $n=0$ and $m=k$ that
\[
|\tau^{-1}(E)|^{\frac pq}\lesssim\sharp E
\]
can be obtained, and therefore, the composition operator $C_\tau$ is bounded from $L^{p,r}({\mathbb Z})$ to $L^{q,r}({\mathbb R})$.
Here, $L^{p,r}({\mathbb Z})$ stands for the Lorentz sequence $\ell^{p,r}({\mathbb Z})$.

\item[{\rm (3)}] When $p>q$, the boundedness from the Lebesgue sequence space $L^p({\mathbb Z})=\ell^p({\mathbb Z})$ to $L^q({\mathbb R})$ fails.
In fact, choosing
\[
f(n)
=
(1+|n|)^{-\frac1p}(\log(3+|n|))^{-\frac1q},
\quad
n\in{\mathbb Z},
\]
we have
\begin{align*}
|C_\tau f(y)|
&=
\left(1+\left[|y|^{\frac pq}\right]\right)^{-\frac1p}
\left(\log\left(3+\left[|y|^{\frac pq}\right]\right)\right)^{-\frac1q}\\
&\ge
(1+|y|)^{-\frac1q}(\log(3+|y|))^{-\frac1q}
\times\left(\frac pq\right)^{-\frac1q}.
\end{align*}
Then, $f \in\ell^p({\mathbb Z})$ but $C_\tau f\notin L^q({\mathbb R})$.
Here, see \cite[Excercise 1.4.8]{Grafakos14-1} for details of the example $f(n)$.

\item[{\rm (4)}] Let $K=\{1,2,\ldots,k\}$.
Then, the estimate
\[
|\tau^{-1}(E)|^{\frac pq}
\lesssim
\sharp E
\]
always holds for any $E\subset K$.
Especially, since $\ell^p(K)\cong{\mathbb C}^k$ independent of $p$, the boundedness of the composition operator $C_\tau:{\mathbb C}^k\to L^q({\mathbb R})$ can be obtained.
\end{itemize}
\end{example}

\begin{example}
Let $p\ge1$, $\Phi(t)=t\log(3+t)$, and let $\tau:{\mathbb R}\to{\mathbb Z}$ be
\[
\tau(y)
=
\left[
\left\{
\Phi^{-1}\left(\frac1{|y|}\right)
\right\}^{-p}
\right],
\quad y\in{\mathbb R}.
\]
Since the volume estimate
\[
|\tau^{-1}(E)|
\lesssim
\left\{
\Phi\left(\frac1{(\sharp E)^{\frac1p}}\right)
\right\}^{-1}
\]
for each $E\subset{\mathbb Z}$, therefore Theorem \ref{main:Lorentz-Orlicz} the composition operator $C_\tau$ is bounded from $\ell^{p,1}({\mathbb Z})$ to $L\log L({\mathbb R})$.
Note that for each $k\in\{0\}\cup{\mathbb N}$,
\[
\tau^{-1}(\{k\})
=
\left(
-\left\{\Phi\left(\frac1{(k+1)^{\frac1p}}\right)\right\}^{-1},
-\left\{\Phi\left(\frac1{k^{\frac1p}}\right)\right\}^{-1}
\right]
\cup
\left[
\left\{\Phi\left(\frac1{k^{\frac1p}}\right)\right\}^{-1},
\left\{\Phi\left(\frac1{(k+1)^{\frac1p}}\right)\right\}^{-1}
\right).
\]
\begin{itemize}
\item[{\rm (1)}] Let
$
E=\{n,n+1,\ldots,m-1\}\subset{\mathbb N}
$ ($n<m$).
Note that
\[
\sharp E=m-n,
\]
\begin{align*}
|\tau^{-1}(E)|
&=
2\sum_{j=n}^m
\left[
\left\{
\Phi\left(\frac1{(k+1)^{\frac1p}}\right)
\right\}^{-1}
-
\left\{
\Phi\left(\frac1{k^{\frac1p}}\right)
\right\}^{-1}
\right]\\
&=
2\left[
\left\{
\Phi\left(\frac1{m^{\frac1p}}\right)
\right\}^{-1}
-
\left\{
\Phi\left(\frac1{n^{\frac1p}}\right)
\right\}^{-1}
\right].
\end{align*}

\item[{\rm (2)}] Let $E$ be a general finite subset of ${\mathbb Z}$.
Then, assuming that the mapping
$
t\mapsto\{\Phi^{-1}(1/t)\}^{-p}
$
is equivalent to some convex function at $t>1$, we have
\[
|\tau^{-1}(E)|
\lesssim
|\tau^{-1}(\{n,n+1,\ldots,m-1\})|
\]
holds, where the natural numbers $1<n<m$ are taken by $n=\min E$ and $\sharp E=m-n$.
It follows from (1) that
\begin{align*}
|\tau^{-1}(E)|
\lesssim
\left\{
\Phi\left(\frac1{(k+1)^{\frac1p}}\right)
\right\}^{-1}
\lesssim
\left\{
\Phi\left(\frac1{(\sharp E)^{\frac1p}}\right)
\right\}^{-1}
\end{align*}
can be obtained, and therefore, the composition operator $C_\tau$ is bounded from $\ell^{p,1}({\mathbb Z})$ to $L\log L({\mathbb R})$.

\item[{\rm (3)}] Let us confirm that the mapping $t\mapsto\{\Phi^{-1}(1/t)\}^{-p}$ is equivalent to the convex function $t\mapsto t^p$ on $(1,\infty)$, that is, for all $t>1$,
\[
\left\{
\Phi^{-1}\left(\frac1t\right)
\right\}^{-p}
\sim
t^p
\]
holds.
It is well known by O'Neil \cite{ONeil65} that for all $t>0$,
\[
t\le\Psi^{-1}(t)\widetilde{\Psi}^{-1}(t)\le2t,
\]
where $\widetilde{\Psi}$ is a complementary function of the Young function $\Psi$, that is,
\[
\widetilde{\Psi}(t)
\equiv
\sup\{st-\Psi(s)\,:\,s\ge0\},
\quad
t\ge0,
\]
and $\Psi^{-1}$ is a generalized inverse function of the nondecreasing function, that is,
\[
\Psi^{-1}(t)
\equiv
\inf\{s\ge0\,:\,\Psi(s)>t\},
\quad
t\ge0.
\]
Additionally, the complementary function $\widetilde{\Phi}$ is provided by the first, third authors and Ono \cite[Appendix B]{HKO23} as follows{\rm :}
\[
\widetilde{\Phi}(t)
\sim
\begin{cases}
0, & 0\le t\le\log3, \\
(t-\log3)^2, & \log3<t\le3, \\
e^t, & t>3.
\end{cases}
\]
Then,
\[
\widetilde{\Phi}^{-1}(t)
\sim
\begin{cases}
\sqrt{t}+\log 3, & 0<t<(3-\log3)^2, \\
\log t, & t\ge(3-\log3)^2.
\end{cases}
\]
It follows that for each $t>1$,
\begin{align*}
\left\{
\Phi^{-1}\left(\frac1t\right)
\right\}^{-p}
\sim
\left\{
t\widetilde{\Phi}^{-1}\left(\frac1t\right)
\right\}^p
\sim
(\sqrt{t}+t\log3)^p
\sim
t^p,
\end{align*}
and then $t>1$ implies that the mapping $t\mapsto\{\Phi^{-1}(1/t)\}^{-p}$ is equivalent to the convex function $t\mapsto t^p$.
\end{itemize}
\end{example}

\begin{example}
Let $p\ge 1$, and let $\tau:{\mathbb R}\to{\mathbb Z}$ be
\[
\tau(y)
=
\left[
\left\{
\log\left(1+\frac1{|y|}\right)
\right\}^{-p}
\right],
\quad y\in{\mathbb R}.
\]
Then, the volume estimate
\begin{equation}\label{eq:vol-exp}
|\tau^{-1}(E)|
\lesssim
\left\{
\exp\left(
\frac1{(\sharp E)^{\frac1p}}
\right)
-1
\right\}^{-1}
\end{equation}
holds for each $E\subset{\mathbb Z}$, and therefore Theorem \ref{main:Lorentz-Orlicz} implies that the composition operator $C_\tau$ is bounded from $\ell^{p,1}({\mathbb Z})$ to $\exp L({\mathbb R})$.
In fact, since the mapping $t\mapsto\{\log(1+1/t)\}^{-p}$ is equivalent to the convex function $t\mapsto t^p$ on $(1,\infty)$, it follows from the similar reason of the previous example that the equation \eqref{eq:vol-exp} holds.
That equivalency follows from the following argument: when $t>1$, there exists $\theta=\theta(t)\in(0,1)$ such that
\begin{align*}
\log\left(1+\frac1t\right)
=
\log(t+1)-\log t
=
\frac1{t+\theta}
\sim
\frac1t,
\end{align*}
and then,
\[
\left\{
\log\left(1+\frac1t\right)
\right\}^{-p}
\sim
t^p.
\]
\end{example}

\section{Remarks}

\subsection{Remarks on the settings of Theorem \ref{main:Lorentz-Orlicz}}

In this subsection, we give some remarks on the $\tau:Y\rightarrow X$ satisfying the volume estimate \eqref{eq:vol} with $(X,\mu)=(Y,\nu)=({\mathbb R},|\cdot|)$. Especially we consider $n=1$ and $\tau:\mathbb{R}\rightarrow\mathbb{R}$.


At first we consider the case $\Phi(t)=t^q$ ($q>p$) as follows.

\begin{proposition}\label{prop:240823}
Let $0<p<q<\infty$.
Then, there does not exist any continuous map $\tau$ such that the volume estimate
\[
|\tau^{-1}(E)|^{\frac1q}
\lesssim
|E|^{\frac1p},
\quad
E\subset{\mathbb R}.
\]
\end{proposition}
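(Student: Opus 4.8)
The plan is to argue by contradiction, exploiting that the exponent $q/p$ strictly exceeds $1$. Rewriting the hypothesized bound in the equivalent form $|\tau^{-1}(E)|\lesssim|E|^{q/p}$, suppose for contradiction that a continuous $\tau:{\mathbb R}\to{\mathbb R}$ and a constant $C$ exist with $|\tau^{-1}(E)|\le C|E|^{q/p}$ for all measurable $E\subset{\mathbb R}$. First I would fix one bounded interval, say $I=[0,1]$. Because $\tau$ is continuous and $I$ is compact, the image $\tau(I)$ is compact, hence contained in some bounded interval $[-M,M]$; this is the single place where continuity enters.

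Next, for each $n\in{\mathbb N}$ I would partition $[-M,M]$ into $n$ consecutive subintervals $E_1,\dots,E_n$, each of length $2M/n$. Since $\tau(I)\subseteq[-M,M]=\bigcup_{j=1}^n E_j$, every $x\in I$ satisfies $\tau(x)\in E_j$ for some $j$, so $I\subseteq\bigcup_{j=1}^n\tau^{-1}(E_j)$. Each $\tau^{-1}(E_j)$ is Borel (preimage of a Borel set under a continuous map), so by finite subadditivity of Lebesgue measure together with the volume estimate applied to each $E_j$,
\[
1=|I|\le\sum_{j=1}^n|\tau^{-1}(E_j)|\le C\sum_{j=1}^n|E_j|^{\frac qp}=C(2M)^{\frac qp}\,n^{1-\frac qp}.
\]

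Since $q>p$ the exponent $1-q/p$ is strictly negative, so letting $n\to\infty$ drives the right-hand side to $0$, contradicting the lower bound $1$. Hence no such continuous $\tau$ can exist. I do not expect a genuine obstacle here: the mechanism is simply that covering $I$ by preimages of a finer and finer partition converts the superlinear scaling $|E|^{q/p}$ into the decaying factor $n^{1-q/p}$, and all measurability and finiteness requirements are furnished automatically by continuity and by the assumed estimate itself. The only conceptual point worth flagging is that continuity is indispensable: it is exactly what bounds $\tau(I)$ and thereby limits the partition to finitely many pieces, which is why the counterexamples of Section \ref{exmple} (with discontinuous $\tau$) are not excluded.
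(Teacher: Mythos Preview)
Your argument is correct and takes a genuinely different route from the paper's. The paper obtains Proposition~\ref{prop:240823} as the special case $\Phi(t)=t^{q}$ of Proposition~\ref{prop:vol}, whose proof passes through Lemmas~\ref{lem:Orlicz-nabla} and~\ref{lem:Holder-Orlicz}: one first notes that $\tau$ cannot be constant, then locates an interval $I$ on which $\tau$ is a homeomorphism, and finally applies the volume estimate to subintervals of $\tau(I)$ to force $(\tau|_{I})^{-1}$ to satisfy a super-linear H\"older condition and hence to be constant, a contradiction.

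Your covering-and-subadditivity argument bypasses both auxiliary lemmas and the local-homeomorphism step, using continuity only to bound $\tau([0,1])$. It is shorter, and it adapts to the $\nabla_2$ setting of Proposition~\ref{prop:vol} as well: the same partition yields $1\le n\,\{\Phi(c\,n^{1/p})\}^{-1}$ for a constant $c>0$, and Lemma~\ref{lem:Orlicz-nabla} applied to $\Phi((\cdot)^{1/p})$ sends the right-hand side to $0$. A further practical advantage is that the paper's step of producing an interval on which $\tau$ is a homeomorphism is delicate for a merely continuous $\tau$ (a nowhere-monotone continuous function is injective on no subinterval), whereas your argument avoids this point entirely.
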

This proposition is a special case of the following proposition.

Here we introduce $\nabla_2$-condition on a Young function $\Phi:[0,\infty)\to[0,\infty)$ as follows.
When there exists a constant $k>1$ such that
\[
\Phi(t)
\le
\frac1{2k}\Phi(kt),
\quad
t>0,
\]
we say that a Young function $\Phi:[0,\infty)\to[0,\infty)$ satisfies $\nabla_2$-condition, denoted by $\Phi\in\nabla_2$. 

\begin{proposition}\label{prop:vol}
Let $0<p<\infty$ and $D>0$, and let $\Phi:[0,\infty)\to[0,\infty)$ satisfying $\Phi((\cdot)^{1/p})\in\nabla_2$.
Then, there does not exist any continuous map $\tau$ such that the volume estimate
\[
|\tau^{-1}(E)|
\le
\left\{\Phi\left(\frac1{D|E|^{\frac1p}}\right)\right\}^{-1},
\quad
E\subset{\mathbb R}
\]
holds.
\end{proposition}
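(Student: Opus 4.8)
The plan is to argue by contradiction, exploiting a tension between the additivity of $|\tau^{-1}(\cdot)|$ and the strongly superadditive behaviour of the right-hand side when a set is chopped into many small pieces. Suppose such a $\tau$ existed. I would first isolate the only consequence of the $\nabla_2$-hypothesis that is actually used, namely that $\Psi(s):=\Phi(s^{1/p})$ grows superlinearly, $\lim_{s\to\infty}\Psi(s)/s=\infty$. Granting this, I would show that \emph{every} bounded interval has a Lebesgue-null preimage, and then cover $\mathbb{R}$ by countably many such intervals to contradict $|\tau^{-1}(\mathbb{R})|=|\mathbb{R}|=\infty$. It is worth noting that continuity of $\tau$ plays no essential role in this scheme: the final covering argument makes the location of a positive-measure preimage unnecessary, so the conclusion in fact holds for arbitrary measurable $\tau$.

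For the key step, fix a bounded interval $I$ with $\ell:=|I|>0$ and partition it into $n$ consecutive half-open subintervals $I_1,\dots,I_n$ of equal length $\ell/n$. Since the $\tau^{-1}(I_j)$ are pairwise disjoint with union $\tau^{-1}(I)$, additivity of Lebesgue measure together with the assumed estimate gives
\[
|\tau^{-1}(I)|=\sum_{j=1}^{n}|\tau^{-1}(I_j)|\le n\left\{\Phi\left(\frac{n^{1/p}}{D\,\ell^{1/p}}\right)\right\}^{-1}=\frac{n}{\Psi\!\left(n/(D^{p}\ell)\right)}.
\]
Writing $t=n/(D^{p}\ell)$, the right-hand side equals $D^{p}\ell\cdot t/\Psi(t)$, which tends to $0$ as $n\to\infty$ precisely by the superlinearity of $\Psi$. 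Hence $|\tau^{-1}(I)|=0$ for every bounded interval, and summing over $I=[k,k+1)$, $k\in\mathbb{Z}$, yields $|\mathbb{R}|=\sum_{k}|\tau^{-1}([k,k+1))|=0$, the desired contradiction.

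The main technical point, and the only place the $\nabla_2$-condition enters, is establishing this superlinearity; this is the step I expect to require the most care. By hypothesis there is $k>1$ with $\Psi(t)\le\frac{1}{2k}\Psi(kt)$, equivalently $\Psi(kt)\ge 2k\Psi(t)$, so the quotient $g(t):=\Psi(t)/t$ satisfies $g(kt)\ge 2g(t)$. Iterating from a fixed $t_0>0$ gives $g(k^{m}t_0)\ge 2^{m}g(t_0)\to\infty$, where $g(t_0)>0$ because $\Phi$, hence $\Psi$, is positive on $(0,\infty)$. Since $\Psi$ is nondecreasing (being the composition of the nondecreasing $\Phi$ with $s\mapsto s^{1/p}$), for $t\in[k^{m}t_0,k^{m+1}t_0]$ one has $g(t)\ge \Psi(k^{m}t_0)/(k^{m+1}t_0)=k^{-1}g(k^{m}t_0)$, which forces $g(t)\to\infty$ along the whole ray. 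Once superlinearity is in hand, the subdivision estimate and the covering of $\mathbb{R}$ are routine, so the crux of the argument lies entirely in this $\nabla_2$-to-superlinearity passage.
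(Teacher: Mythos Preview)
Your proof is correct and takes a genuinely different route from the paper's. The paper exploits continuity in an essential way: it locates an interval $I$ on which $\tau$ is a homeomorphism, applies the volume estimate to subintervals of $\tau(I)$ to obtain a H\"older-type bound $|\tau^{-1}(x)-\tau^{-1}(y)|\le\{\Phi(1/(D|x-y|^{1/p}))\}^{-1}$ on the inverse map, and then uses the $\nabla_2$ condition (via a lemma giving $\Psi(t)\gtrsim t^\gamma$ for some $\gamma>1$) to show that this forces $(\tau|_I)^{-1}$ to have vanishing derivative everywhere, hence to be constant --- a contradiction. Your argument instead partitions a fixed target interval into $n$ small pieces and plays the additivity of $|\tau^{-1}(\cdot)|$ against the superadditive growth of the right-hand side, needing only the superlinearity $\Psi(t)/t\to\infty$ rather than the stronger power-type growth. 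This buys you a shorter and more elementary proof and, as you correctly observe, a strictly stronger conclusion: the argument rules out \emph{any} measurable $\tau$, not just continuous ones. The paper's approach, by contrast, genuinely needs continuity to produce the local inverse on which its differentiation argument runs.
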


To prove this proposition, we use the following lemmas. 
\begin{lemma}\label{lem:Orlicz-nabla}
If $\Phi\in\nabla_2$, then there exists $\gamma>1$ such that
\[
\frac{\Phi(t)}{t^\gamma}
\lesssim
\frac{\Phi(s)}{s^\gamma},
\quad
0<t<s<\infty.
\]
\end{lemma}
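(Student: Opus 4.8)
The plan is to extract from the $\nabla_2$-condition a power-type lower bound on the growth of $\Phi$, and then upgrade the single-scale inequality to the claimed monotonicity of $\Phi(t)/t^\gamma$ across all scales. Recall that $\Phi\in\nabla_2$ gives a constant $k>1$ with $\Phi(t)\le\frac1{2k}\Phi(kt)$ for all $t>0$, equivalently $\Phi(kt)\ge 2k\,\Phi(t)$. My first step is to iterate this: for every integer $n\ge0$ and every $t>0$,
\[
\Phi(k^n t)\ge (2k)^n\,\Phi(t).
\]
The idea is that the factor $2k$ grows strictly faster than the dilation factor $k$ applied to the argument, and this discrepancy is exactly what forces super-linear (indeed super-power) growth.

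Next I would convert this into the exponent $\gamma$. Setting $\gamma\equiv\dfrac{\log(2k)}{\log k}>1$ (so that $(2k)=k^{\gamma}$), the iterated inequality reads $\Phi(k^n t)\ge k^{n\gamma}\Phi(t)$, i.e.
\[
\frac{\Phi(k^n t)}{(k^n t)^{\gamma}}
\ge
\frac{k^{n\gamma}\Phi(t)}{k^{n\gamma}t^{\gamma}}
=
\frac{\Phi(t)}{t^{\gamma}}.
\]
Thus the quantity $\Phi(u)/u^{\gamma}$ does not decrease when $u$ is multiplied by an integer power of $k$. This handles the claim for pairs $t<s$ whose ratio is exactly a power of $k$; the remaining work is to fill the gaps between consecutive powers of $k$.

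For general $0<t<s<\infty$, I would choose the integer $n\ge0$ with $k^n\le s/t<k^{n+1}$, so $s\in[k^n t,k^{n+1}t)$. The goal is to compare $\Phi(t)/t^\gamma$ with $\Phi(s)/s^\gamma$ up to a multiplicative constant independent of $t,s$. Using that $\Phi$ is nondecreasing (being convex with $\Phi(0)=0$) together with the single-scale bound $\Phi(ks)\ge 2k\,\Phi(s)$ to control $\Phi$ over the short interval $[k^nt,k^{n+1}t)$, and comparing $s^\gamma$ with $(k^n t)^\gamma$ which differ only by a factor at most $k^\gamma$, I expect to obtain
\[
\frac{\Phi(t)}{t^{\gamma}}
\le
\frac{\Phi(k^n t)}{(k^n t)^{\gamma}}
\lesssim
\frac{\Phi(s)}{s^{\gamma}},
\]
with implied constant depending only on $k$ and $\gamma$ (hence only on $k$). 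The main obstacle, and the only place requiring care, is this last interpolation across a non-integer ratio: the iteration gives monotonicity only along the lattice $\{k^n\}$, so I must absorb the at-most-one-step discrepancy between $s$ and $k^n t$ into the $\lesssim$ constant using monotonicity of $\Phi$ and the bounded ratio $s/(k^n t)\in[1,k)$. Once that bookkeeping is done the inequality $\Phi(t)/t^\gamma\lesssim\Phi(s)/s^\gamma$ for $0<t<s$ follows, which is precisely the assertion.
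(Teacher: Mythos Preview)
Your argument is correct: iterating $\Phi(kt)\ge 2k\,\Phi(t)$ gives $\Phi(k^n t)\ge(2k)^n\Phi(t)$, and with $\gamma=\log(2k)/\log k>1$ this says $\Phi(k^n t)/(k^n t)^\gamma\ge\Phi(t)/t^\gamma$; for arbitrary $0<t<s$ with $k^n\le s/t<k^{n+1}$, monotonicity of $\Phi$ and the bound $s^\gamma<k^\gamma(k^n t)^\gamma$ then yield $\Phi(t)/t^\gamma\le\Phi(k^n t)/(k^n t)^\gamma\le k^\gamma\Phi(s)/s^\gamma=2k\,\Phi(s)/s^\gamma$, exactly as you outline. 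The paper does not give its own proof of this lemma---it merely records that the statement is equivalent to \cite[Lemma~1.2.2]{Kokilashvili-Krbec1991} and omits the details---so your direct argument in fact supplies what the paper leaves to the reference.
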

Since this lemma is equivalent to \cite[Lemma~1.2.2]{Kokilashvili-Krbec1991}, we omit the detail.

\begin{lemma}\label{lem:Holder-Orlicz}
Let $f:\mathbb{R}\rightarrow\mathbb{R}$ be a function and $I\subset{\mathbb R}$ be an interval, and let $\Phi\in\nabla_2$.
If there exists $D>0$ such that
\[
|f(x)-f(y)|
\le
\left\{\Phi\left(\frac1{D|x-y|}\right)\right\}^{-1},
\quad
x,y\in I,
\]
then $f$ is a constant function on $I$.
\end{lemma}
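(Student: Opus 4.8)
Lemma \ref{lem:Holder-Orlicz} — plan.

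The plan is to show that the $\nabla_2$-condition, through Lemma \ref{lem:Orlicz-nabla}, upgrades the given modulus-of-continuity bound into a Hölder condition of exponent strictly larger than $1$, and then to invoke the classical fact that such a function must be constant on any interval.

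First I would apply Lemma \ref{lem:Orlicz-nabla} to produce $\gamma>1$ with $\Phi(t)/t^\gamma\lesssim\Phi(s)/s^\gamma$ for $0<t<s<\infty$; this says $s\mapsto\Phi(s)/s^\gamma$ is essentially nondecreasing. Fixing a reference point $t_0>0$ and comparing against it, one gets $\Phi(s)\gtrsim s^\gamma$ for all $s\ge t_0$. Substituting $s=1/(D|x-y|)$, which exceeds $t_0$ precisely when $|x-y|<1/(Dt_0)=:\delta$, the hypothesis turns into
\[
|f(x)-f(y)|
\le
\left\{\Phi\left(\frac{1}{D|x-y|}\right)\right\}^{-1}
\lesssim
(D|x-y|)^\gamma
\lesssim
|x-y|^\gamma
\]
for all $x,y\in I$ with $|x-y|<\delta$.

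Next I would run the standard telescoping argument, using that $I$ is an interval so that the segment between any two of its points stays inside $I$. Given arbitrary $x,y\in I$, partition the segment between them into $n$ equal pieces $x=x_0,x_1,\ldots,x_n=y$; for $n$ large enough each piece has length below $\delta$, so the Hölder estimate applies termwise and summing yields
\[
|f(x)-f(y)|
\le
\sum_{k=1}^{n}|f(x_{k-1})-f(x_k)|
\lesssim
n\left(\frac{|x-y|}{n}\right)^\gamma
=
|x-y|^\gamma\, n^{1-\gamma}.
\]
Letting $n\to\infty$ and using $\gamma>1$ forces $f(x)=f(y)$. As $x,y\in I$ were arbitrary, $f$ is constant on $I$.

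The only genuine content lies in the first step: recognizing that $\nabla_2$ is exactly the condition guaranteeing $\Phi$ grows faster than some power $t^\gamma$ with $\gamma>1$, which is what converts a bound that a priori yields only continuity into a super-Lipschitz estimate. Once that conversion is made, the conclusion is the classical observation that a Hölder-$\gamma$ function with $\gamma>1$ has vanishing difference quotients in the limit and is therefore locally, hence globally on $I$, constant; I expect no further subtlety there.
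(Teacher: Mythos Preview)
Your proof is correct and shares the same core idea as the paper: invoke Lemma~\ref{lem:Orlicz-nabla} to convert the $\nabla_2$-hypothesis into a lower bound $\Phi(s)\gtrsim s^\gamma$ with $\gamma>1$ for large $s$, which upgrades the given estimate to a local H\"older-$\gamma$ condition with $\gamma>1$.

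The only divergence is in the concluding step. The paper observes that the difference quotient satisfies
\[
\left|\frac{f(x+h)-f(x)}{h}\right|\lesssim|h|^{\gamma-1}\to0,
\]
so $f$ is differentiable with $f'\equiv0$ on $I$, and hence constant. You instead telescope over an $n$-point partition and let $n\to\infty$. Both are standard routes from ``H\"older of exponent $>1$'' to ``constant''; your telescoping argument is marginally more self-contained in that it avoids the implicit appeal to the mean value theorem needed to pass from $f'\equiv0$ to constancy, while the paper's version is a bit shorter on the page. Neither has any real advantage over the other.
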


\begin{proof}
Fix $x\in I$.
Note that
\[
\left|\frac{f(x+h)-f(x)}h\right|
\le
\frac1{|h|}
\left\{\Phi\left(\frac1{D|h|}\right)\right\}^{-1}
\]
for all $h\in{\mathbb R}\setminus\{0\}$.
By Lemma \ref{lem:Orlicz-nabla}, there exists $\gamma>1$ such that
\[
\frac1{|h|}
\left\{\Phi\left(\frac1{D|h|}\right)\right\}^{-1}
\lesssim
|h|^{\gamma-1}
\]
when $|h|<1$.
It follows that
\[
\left|\frac{f(x+h)-f(x)}h\right|
\to0
\]
as $h\to0$, and then $f(x)$ is differentiable, and $f'(x)=0$.
It follows that $f(x)$ is a constant function on $I$.
\end{proof}

We go back the proof of Proposition \ref{prop:vol}.

\begin{proof}[Proof of Proposition {\rm \ref{prop:vol}}]
Assume that there exists a continuous map $\tau:{\mathbb R}\to{\mathbb R}$ such that the volume estimate
\[
|\tau^{-1}(E)|
\le
\left\{\Phi\left(\frac1{D|E|^{\frac1p}}\right)\right\}^{-1},
\quad
E\subset{\mathbb R}.
\]
It is easy to see that $\tau$ is not constant map, and then we can find an interval $I\ne\emptyset$ such that $\tau:I\to\tau(I)$ is homeomorphism.

Take $E=[x,y)\subset I$ in the volume estimate.
Since the homeomorphism $\tau$ deduces that
\[
\tau^{-1}(E)
=
\tau^{-1}([x,y))
=
[\tau^{-1}(x),\tau^{-1}(y))
\quad or \quad
[\tau^{-1}(y),\tau^{-1}(x)),
\]
the volume estimate gives
\[
|\tau^{-1}(x)-\tau^{-1}(y)|
\le
\left\{
\Phi\left(
\frac1{D|x-y|^{\frac1p}}
\right)
\right\}^{-1},
\]
and then by Lemma \ref{lem:Holder-Orlicz}, $\tau^{-1}$ is a constant function on $I$.
This is contradiction to existence of $\tau^{-1}$.
\end{proof}

\subsection{Remarks on the case $q=\infty$ in Theorem \ref{main:Lorentz-Orlicz-2}}

In this subsection, we assume that $(X,\mu)=({\mathbb R}^n,|\cdot|)$ and $(Y,\nu)=({\mathbb R}^m,|\cdot|)$, and consider some continuous maps $\tau:{\mathbb R}^m\to{\mathbb R}^n$.
Then we see that the boundedness of $C_\tau:L^{p,\infty}({\mathbb R}^n)\to L^\infty({\mathbb R}^m)$ does not hold as follows.

\begin{proposition}
Let $0<p<\infty$.
Then, there does not exist any continuous map $\tau:{\mathbb R}^m\to{\mathbb R}^n$ such that the composition operator $C_\tau:L^{p,\infty}({\mathbb R}^n)\to L^\infty({\mathbb R}^m)$ is bounded.
\end{proposition}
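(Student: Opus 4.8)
The plan is to argue by contradiction, reducing the assumed boundedness to a rigid volume threshold that a continuous map cannot satisfy. Suppose $C_\tau:L^{p,\infty}(\mathbb{R}^n)\to L^\infty(\mathbb{R}^m)$ is bounded with operator norm $D$. First I would test the operator on indicator functions, exactly as in the ``only if'' part of Theorem \ref{main:Lorentz-Orlicz}. For a measurable set $E\subset\mathbb{R}^n$ of finite measure one has $C_\tau\chi_E=\chi_{\tau^{-1}(E)}$, and by Lemma \ref{lem:Lorentz-indicator} with $q=\infty$ (so that $q^{-1/q}=1$) the boundedness inequality reads $\|\chi_{\tau^{-1}(E)}\|_{L^\infty(\mathbb{R}^m)}\le D|E|^{1/p}$.

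The next step is to read off what this forces on the $L^\infty$ side. Since $\|\chi_{\tau^{-1}(E)}\|_{L^\infty}$ equals $1$ whenever $|\tau^{-1}(E)|>0$ and $0$ otherwise, the inequality says that $|\tau^{-1}(E)|>0$ implies $|E|\ge D^{-p}$. Equivalently, setting $\delta=D^{-p}>0$, every measurable set $E$ with $|E|<\delta$ must satisfy $|\tau^{-1}(E)|=0$. Thus the $L^\infty$ target collapses the general volume estimate \eqref{eq:vol} into an all-or-nothing condition: $\tau$ can put no positive mass into the preimage of any set of Lebesgue measure below the fixed threshold $\delta$.

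Finally I would contradict this threshold using continuity alone. Fix any $y_0\in\mathbb{R}^m$ and set $x_0=\tau(y_0)$; choose $\epsilon>0$ small enough that the ball $B=B(x_0,\epsilon)\subset\mathbb{R}^n$ has $|B|=c_n\epsilon^n<\delta$, which is possible since $c_n\epsilon^n\to0$ as $\epsilon\downarrow0$. By continuity of $\tau$ there is an open neighborhood $U\ni y_0$ with $\tau(U)\subset B$, whence $\emptyset\ne U\subset\tau^{-1}(B)$; as $U$ is open and nonempty, $|\tau^{-1}(B)|\ge|U|>0$, contradicting the threshold derived above.

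I expect no serious analytic obstacle: unlike Proposition \ref{prop:vol}, no $\nabla_2$-type regularity is invoked, so Lemmas \ref{lem:Orlicz-nabla} and \ref{lem:Holder-Orlicz} are not needed here. The only points demanding care are the two reduction steps, namely verifying that an indicator has $L^\infty$ norm $0$ or $1$ precisely according to whether its preimage is null, and that preimages of open balls under a continuous map are measurable and contain open sets. I would also remark that the argument uses no nonsingularity of $\tau$, since the open neighborhood $U$ already carries positive Lebesgue measure regardless.
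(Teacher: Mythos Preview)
Your argument is correct, and it is genuinely different from what the paper does. The paper does not argue directly at the level of $L^{p,\infty}$: it first invokes the embedding $L^p(\mathbb{R}^n)\hookrightarrow L^{p,\infty}(\mathbb{R}^n)$ to reduce to the statement for $C_\tau:L^p\to L^\infty$, and then disproves the latter by feeding in an explicit singular test function $|x-a|^{-\gamma}\chi_{B}$ with $\gamma<n/p$, chosen so that $a$ lies in the range of $\tau$ near a point $b$; continuity of $\tau$ then makes $|\tau(\cdot)-a|^{-\gamma}$ blow up near $b$, contradicting boundedness into $L^\infty$. Your route instead stays in $L^{p,\infty}$ and mirrors the indicator-testing philosophy of Theorem~\ref{main:Lorentz-Orlicz}: from $\|\chi_{\tau^{-1}(E)}\|_{L^\infty}\le D|E|^{1/p}$ and the $0/1$ dichotomy of indicator $L^\infty$ norms you extract a hard threshold $|E|<D^{-p}\Rightarrow|\tau^{-1}(E)|=0$, which is then violated by any small ball centered at a value of $\tau$. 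Your version is arguably cleaner---no explicit singular function, no reduction step, and no need to track the indicator cutoff in the test function---while the paper's version has the virtue of exhibiting a concrete $f$ witnessing unboundedness. Both hinge on the same use of continuity (open preimage of a small ball has positive measure), so the underlying obstruction is the same.
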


This proposition follows from the following proposition. This is because $L^{p}(\mathbb{R}^n)$ is embedded in $L^{p,\infty}(\mathbb{R}^n)$.

\begin{proposition}
Let $0<p<\infty$.
Then, there does not exist any continuous map $\tau:{\mathbb R}^m\to{\mathbb R}^n$ such that the composition operator $C_\tau:L^p({\mathbb R}^n)\to L^\infty({\mathbb R}^m)$ is bounded.
\end{proposition}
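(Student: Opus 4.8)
The plan is to reduce the boundedness hypothesis to a volume estimate by testing the operator on indicator functions, and then to contradict that estimate using the continuity of $\tau$. First I would assume, for contradiction, that $C_\tau:L^p(\mathbb{R}^n)\to L^\infty(\mathbb{R}^m)$ is bounded with operator norm $C<\infty$. For any measurable set $E\subset\mathbb{R}^n$ one has $C_\tau\chi_E=\chi_{\tau^{-1}(E)}$, and since $\|\chi_E\|_{L^p(\mathbb{R}^n)}=|E|^{1/p}$ while $\|\chi_{\tau^{-1}(E)}\|_{L^\infty(\mathbb{R}^m)}$ equals $1$ when $|\tau^{-1}(E)|>0$ and $0$ otherwise, the boundedness inequality $\|\chi_{\tau^{-1}(E)}\|_{L^\infty}\le C\,|E|^{1/p}$ forces the implication
\[
|E|<C^{-p}\quad\Longrightarrow\quad|\tau^{-1}(E)|=0.
\]

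Next I would exploit continuity. Fix any point $y_0\in\mathbb{R}^m$ and set $x_0=\tau(y_0)$. I would then choose $r>0$ small enough that the ball $B=B(x_0,r)\subset\mathbb{R}^n$ satisfies $|B|<C^{-p}$; this is possible because $|B(x_0,r)|\to0$ as $r\downarrow0$. By the previous step, $|\tau^{-1}(B)|=0$. On the other hand, $\tau$ is continuous, so $\tau^{-1}(B)$ is an open subset of $\mathbb{R}^m$, and it contains $y_0$ because $\tau(y_0)=x_0\in B$. A nonempty open subset of $\mathbb{R}^m$ has strictly positive Lebesgue measure, so $|\tau^{-1}(B)|>0$. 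This contradicts $|\tau^{-1}(B)|=0$, and the contradiction completes the argument.

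I expect the only genuinely delicate point to be the passage to the volume estimate: one must note that the identity $C_\tau\chi_E=\chi_{\tau^{-1}(E)}$ holds as honest functions, so that the essential-supremum computation of $\|\chi_{\tau^{-1}(E)}\|_{L^\infty}$ is unambiguous, and that boundedness into $L^\infty$ therefore really does prohibit positive-measure preimages of sets of small measure. Once this is in hand, the clash between \emph{small balls have null preimage} and \emph{continuous preimages of open sets are open, hence of positive measure} is immediate, and no further estimates are needed. I would also remark that this phenomenon is exactly the endpoint manifestation of the volume condition \eqref{eq:vol}: as $\Phi$ degenerates toward the $L^\infty$ target, the right-hand side of the volume estimate is driven to $0$ on small sets, which continuity cannot accommodate.
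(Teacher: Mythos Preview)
Your proof is correct and takes a genuinely different route from the paper's. The paper tests the boundedness on the explicit function $f=|\cdot-a|^{-\gamma}\chi_{B(0,1)}$ with $0<\gamma<n/p$, choosing $a$ in the range of $\tau$; continuity of $\tau$ then makes $|\tau(\cdot)-a|^{-\gamma}$ blow up near any preimage of $a$, so $C_\tau f\notin L^\infty(\mathbb{R}^m)$. Your argument instead tests on indicators $\chi_E$ and exploits the binary nature of $\|\chi_{\tau^{-1}(E)}\|_{L^\infty}\in\{0,1\}$ to force $|\tau^{-1}(E)|=0$ whenever $|E|<C^{-p}$, which you then contradict via the observation that continuous preimages of open balls are nonempty open sets. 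Your route is more elementary---it avoids constructing a singular test function and needs no integrability computation---and, as you note, it makes transparent that this proposition is the $L^\infty$ degeneration of the volume condition \eqref{eq:vol} governing the rest of the paper. The paper's approach, by contrast, exhibits a concrete $f\in L^p$ with $C_\tau f\notin L^\infty$, which is useful when one wants an explicit witness rather than a structural obstruction.
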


\begin{proof}
Assume that there exists a continuous map $\tau:{\mathbb R}^m\to{\mathbb R}^n$ such that the composition operator $C_\tau:L^p({\mathbb R}^n)\to L^\infty({\mathbb R}^m)$ is bounded.
Then since $f\equiv|\cdot|^{-\gamma}\chi_{B(0,1)}\in L^p({\mathbb R}^n)$ for $\gamma<n/p$, $C_\tau f(x)=|\tau(x)|^{-\gamma}$ is a bounded function.
Here, let $a\in\tau(B(0,1))$.
It follows that by the continuity of $\tau$,
\[
\||\tau(\cdot)-a|^{-\gamma}\chi_{B(0,1)}\|_{L^\infty}
=
\infty.
\]
In fact, for all $M>0$, by the continuity of $\tau$, there exists $\varepsilon>0$ such that for all $x\in B(b,\varepsilon)\cap B(0,1)$,
\[
|\tau(x)-a|^{-\gamma}>M,
\]
where $\tau(b)=a$ for $b\in B(0,1)$.
Hence, this is contradiction.
\end{proof}

{\bf Acknowledgements.} 
The authors are thankful to Professor Yoshihiro Sawano for his giving very helpful comments and advice.
The first-named author (N.H.) was supported by Grant-in-Aid for Research Activity Start-up Grant Number 23K19013.
The second author (M.I.) is supported by the Grant-in-Aid for Scientific Research (C) (No.23K03174 ), Japan Society for the Promotion of Science and JST CREST, No. JPMJCR1913.


\begin{thebibliography}{999}

\bibitem{ADV07}
S. C. Arora,
G. Datt,
S. Verma,
Composition operators on Lorentz spaces,
Bull. Austral. Math. Soc. 76 (2007), no. 2, 205--214.

\bibitem{ADV09}
S. C. Arora, G. Datt and S. Verma,
Operators on Lorentz sequences spaces Math. Bohem. {\bf 134} (2009), no. 1, 87--98.

\bibitem{CCEHK16}
T. Chawziuk, Y. Cui, Y. Estaremi, H. Hudzik and R. Kaczmarek,
Composition and multiplication operators between Orlicz function spaces,
J. Inequal. Appl. (2016), Paper No. 52, 18 pp.

\bibitem{CHRL04}
Y. Cui, H. Hudzik, R. Kumar and L. Maligranda,
Composition operators in Orlicz spaces,
(English summary) J. Aust. Math. Soc. 76 (2004), no. 2, 189--206.

\bibitem{Evseev17}
N. A. Evseev,
Bounded composition operator on Lorentz spaces,
Math. Notes 102 (2017), no. 5--6, 763--769.

\bibitem{EvMe19-1}
N. Evseev and A. Menovschikov,
Bounded operators on mixed norm Lebesgue spaces,
Complex Anal. Oper. Theory 13 (2019), no. 5, 2239--2258.

\bibitem{EvMe19-2}
N. A. Evseev and A. V. Menovshchikov,
The composition operator on mixed-norm Lebesgue spaces,
(Russian) Mat. Zametki 105 (2019), no. 6, 816--823;
translation in Math. Notes 105 (2019), no. 5--6, 812--817.

\bibitem{Grafakos14-1}
L. Grafakos,
Classical Fourier Analysis,
Texts in Mathmatics, Springer, New York, Third edition {\bf 249} (2014).

\bibitem{HKO23}
N. Hatano, R. Kawasumi and T. Ono,
Predual of weak Orlicz spaces and its applications to Fefferman-Stein vector-valued maximal inequality,
Tokyo J. Math. {\bf 46} (2023), no. 1, 125--160.

\bibitem{Kokilashvili-Krbec1991}
V.~Kokilashvili and M.~Krbec, Weighted inequalities in Lorentz and Orlicz spaces, World Scientific Publishing Co., Inc., River Edge, NJ, 1991. 

\bibitem{Kumar97}
R. Kumar,
Composition operators on Orlicz spaces,
Integral Equations Operator Theory 29 (1997), no. 1, 17--22.

\bibitem{LaMa11}
L.E. Labuschagne, and W.A. Majewski, Maps on noncommutative Orlicz spaces, Illinois Journal of Mathematics, 55(2011) no.3, 1053-1081.

\bibitem{ONeil65}
R. O'Neil,
Fractional Integration in Orlicz Space,
Trans. Amer. Math. Soc., {\bf 115} (1965), 300--328.

\bibitem{RaSh12}
K. Raj and S. K. Sharma,
Composition operators on Musielak-Orlicz spaces of Bochner type,
Math. Bohem. 137 (2012), no. 4, 449--457.
\bibitem{Rao-Ren1991}
M.~M.~Rao and Z.~D.~Ren,
{Theory of Orlicz Spaces},
Marcel Dekker, Inc., New York, Basel and Hong Kong, 1991.


\bibitem{Singh76}
R. K. Singh,
Composition operators induced by rational functions,
Proc. Amer. Math. Soc. 59 (1976), no. 2, 329--333.

\end{thebibliography}
\end{document}